\pgfplotsset{compat=1.18, width=12cm}
\renewcommand{\phi}{\varphi}
\newcommand{\mb}[1]{\mathbb{#1}}
\newcommand*{\rom}[1]{\expandafter\romannumeral #1}
\newcommand{\mm}{\mathbb{R}}
\newcommand{\m}{t}
\newcommand{\mmone}{t-1}
\newtheorem{theorem}{Theorem}
\newtheorem{conjecture}[theorem]{Conjecture}
\newtheorem{lemma}[theorem]{Lemma}
\newtheorem{proposition}[theorem]{Proposition}
\newtheorem{remark}[theorem]{Remark}
\title{Illumination number of 3-dimensional cap bodies}
\author{Andrii Arman}
\address{Department of Mathematics, University of Manitoba, Winnipeg, MB, R3T 2N2, Canada}
\email{andrew0arman@gmail.com}
\thanks{}
\author{Jaskaran Singh Kaire}
\email{singhj82@myumanitoba.ca}
\thanks{The second author was supported in part by NSERC of Canada Discovery Grant RGPIN-2020-05357.}
\author{Andriy Prymak}
\email{prymak@gmail.com}
\thanks{The third author was supported by NSERC of Canada Discovery Grant RGPIN-2020-05357.}
\date{\today}
\keywords{Illumination problem, Hadwiger's covering conjecture, cap body, random rotations, integer linear programming.}
\subjclass[2020]{Primary 52A15; Secondary 52A40, 52C17, 90C05}
\begin{document}
\begin{abstract}

The illumination conjecture asserts that any convex body in $n$-dimensional Euclidean space can be illuminated by at most $2^n$ external light sources or parallel beams of light. Despite recent progress on the illumination conjecture, it remains open in general, as well as for specific classes of bodies. 

Bezdek, Ivanov, and Strachan showed that the conjecture holds for symmetric cap bodies in sufficiently high dimensions. Further, Ivanov and Strachan calculated the illumination number for the class of 3-dimensional centrally symmetric cap bodies to be 6.

In this paper, we show that even the broader class of all 3-dimensional cap bodies has the same illumination number 6, in particular, the illumination conjecture holds for this class. The illuminating directions can be taken to be vertices of a regular tetrahedron, together with two special directions depending on the body. The proof is based on probabilistic arguments and integer linear programming.

\end{abstract}

\maketitle

\section{Introduction}

Hadwiger~\cite{H} asked the following question in 1957: For $n\geq 3$, what is the smallest number $N(n)$ such that every $n$-dimensional convex body can be covered by the union at most $N(n)$ of translates of the body's interior? The earliest formulation of the question dates back to 1955~\cite{LE}, where Levi proved that $N(2)=4$.
After Hadwiger and Levi, the question was restated by
Gohberg and Markus~\cite{GM} (independently without knowing about the work of Levi and
Hadwiger) in terms of covering by homothetic copies. 
For more details about the history of the problem and an extensive list of partial results, see, e.g.,~\cite{BK}. 

The example of the $n$-dimensional cube yields $N(n)\ge 2^n$. It is widely believed that $N(n)=2^n$ for all $n\ge 2$. 
\begin{conjecture}\label{conj}
    For every $n\geq 3$, any $n$-dimensional convex body can be covered by $2^n$ or fewer smaller positive homothetic copies of itself. Furthermore, $2^{n}$ homothetic copies are required only if the body is an affine copy of the $n$-cube. 
\end{conjecture}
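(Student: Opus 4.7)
The statement is the Hadwiger--Gohberg--Markus covering conjecture, which has been open for every $n\ge 3$ since 1955, so the plan I can honestly propose is a program rather than an argument to be executed in this paper. The natural setup is Boltyanski's equivalence between covering $K$ by smaller positive homothets and \emph{illuminating} $\partial K$ by directions: a direction $d\in S^{n-1}$ illuminates $p\in\partial K$ if $p+\varepsilon d\in\operatorname{int} K$ for some $\varepsilon>0$, and the minimum cardinalities in the two formulations coincide. I would work in the illumination formulation throughout because its local nature is more tractable, and because the second part of the statement (sharpness only for affine cubes) is naturally phrased as a rigidity statement for illuminating sets.

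The standard strategy is to split the problem into two regimes and then merge them by approximation. For bodies with $C^2$ boundary, each direction illuminates an entire shadow-boundary region, so the problem reduces to a covering on $S^{n-1}$ controlled by the surface area measure; this is the setting of the best current bounds, such as Huang--Slomka--Tkocz--Vritsiou's $c^n\binom{2n}{n}$ with $c<1$. For polytopes it suffices to illuminate the vertices, and the plan would be to induct on dimension via vertex figures while exploiting additional structure (central symmetry, belt structure, $1$-unconditional position, zonotopal decompositions). A general convex body is approximable by either class, so a full proof must arrange the smooth and polytopal bounds to agree at $2^n$ in the limit, which is the first serious difficulty: both current approaches lose at least polynomial factors in $n$ that no known refinement removes.

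For the rigidity half of the statement I would pursue a stability approach: define a quantitative distance $\delta(K)$ from the affine class of the $n$-cube (for instance via Banach--Mazur distance, or via a functional measuring how close the normal fan is to that of the cube), prove that $\delta(K)\ge\varepsilon$ forces an illumination by strictly fewer than $2^n$ directions, and separately treat $\delta(K)<\varepsilon$ by a perturbative argument anchored at the cube using its unique $2^n$-illumination by the diagonal directions. The core obstacle, and the reason the conjecture has resisted attack for seventy years, is precisely this stability step: no invariant is currently known that controls the illumination number uniformly over all bodies bounded away from the cube, and in the absence of such an invariant every existing proof technique stops well above $2^n$. The present paper sidesteps this obstacle by restricting to three-dimensional cap bodies, where the extremal configurations can be enumerated by integer programming after a probabilistic reduction; a realistic medium-term plan toward the conjecture is to accumulate such class-by-class results and look for a common invariant that unifies them, rather than to attack the conjecture head-on.
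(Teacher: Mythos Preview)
The statement in question is labeled \emph{Conjecture} in the paper, and the paper does not claim or attempt to prove it: it is the Hadwiger--Gohberg--Markus covering conjecture, open for all $n\ge 3$. You recognize this correctly and, rather than offering a proof, you outline a speculative program (Boltyanski's illumination equivalence, smooth/polytopal dichotomy, a hypothetical stability invariant) while explicitly conceding that the decisive step---controlling the illumination number away from the cube---is not known. So your proposal is not a proof, and it does not pretend to be one; there is no ``paper's own proof'' to compare it against.

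The only point worth flagging is a framing issue: since the task was to supply a proof of the stated result, the honest answer is simply that no proof is known and the paper does not provide one either. Your extended discussion of possible approaches is reasonable as a survey, but none of the ingredients you list (the Huang--Slomka--Tkocz--Vritsiou bound, induction on vertex figures, Banach--Mazur stability) currently combine to reach $2^n$, and you say as much. There is no gap to name beyond the one you already identify: the conjecture is open.
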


Boltyanski~\cite{B} showed that the conjecture is equivalent to a certain illumination problem.
For a convex body $K$,
a direction (unit vector) $v$ illuminates a point $x$ on the boundary $\partial K$ of $K$, if the ray $\{x+vt \;:\; t\geq 0\}$ has nonempty intersection with interior of $K$. The set of directions 
$\{v_i\}_{i=1}^k$ is said to illuminate $K$ if every point of $\partial K$ is illuminated by some $v_i$. The illumination number $I(K)$ of $K$ is the smallest $k$ for which $K$ can be illuminated by $k$ directions. Boltyanski~\cite{B} showed that ~\cref{conj} is equivalent to the following: the illumination number of a convex body in $\mm{}^n$ does not exceed $2^n$, with equality only for affine copies of the $n$-cube.

In literature, \cref{conj} has been referred to as the Hadwiger Covering Conjecture, the Levi-Hadwiger(-Gohberg-Markus) Conjecture, and Hadwiger-Boltyanski Illumination Conjecture. 



As of today, the conjecture has only been solved for the $2$-dimensional case, and it is believed to be notoriously hard even for the next smallest dimension, namely, $\mm^3$. Papadoperakis~\cite{P} proved that the illumination number of any convex body in ${\mm{}}^3$ is at most 16, and Prymak~\cite{Pr} improved the bound to 14. This result is far from the expected illumination number $8$, but it is the best known upper bound on illumination number in $\mm{}^3$. 

The conjecture has been verified for certain classes of bodies in $\mm{}^3$. Bezdek and Kiss~\cite{BKiss} showed that the illumination number of almost smooth convex bodies and the illumination number of convex bodies of constant width is at most 6. The result about bodies of constant width has been proven independently by several groups of researchers (refer to~\cite{BLNP} for more details). Martini~\cite{M} proved that the illumination number of every zonotope in $\mm^3$, which is not a parallelotope, is at most 6.  Lassak \cite{L} proved that if the convex body is centrally symmetric, then the conjecture holds in $\mm{}^3$. Furthermore, Dekster \cite{De} proved (without treatment of the equality case) that the conjecture holds in $\mm{}^3$ if the convex body is symmetric about a plane.


Let $\mb{B}^n$ be the closed unit ball of $\mm^n$ centred at the origin, and $\text{conv}(\cdot)$ denotes the closed convex hull. $K\subset\mm^n$ is a cap body (of $\mb{B}^n$) if and only if $K=\text{conv}(\mb{B}^n\cup V)$ for a set $\{x_i\}_{i=1}^m\subset\mm^n\setminus\mb{B}^n$ of ``vertices'' such that the closed segment joining any two distinct $x_i,x_j$, $1\le i<j\le m$, intersects $\mb{B}^n$. Alternatively, a cap body is the union of the ball $\mb{B}^n$ and the cone-shaped ``spikes'' $\text{conv}(\{x_i\} \cup \mb{B}^{n})\setminus \mb{B}^n$, $1\le i\le m$, which have mutually disjoint interiors, see \cref{fig:capbody} in the next section for an example. 

Cap bodies were considered by Nasz\'odi in~\cite{Na} (called spiky balls there) as examples of convex bodies which can be ``very close'' to the ball while having exponentially large (w.r.t. dimension $n$) illumination numbers. Bezdek, Ivanov and Strachan~\cite{BIS} proved Conjecture~\ref{conj} for centrally symmetric cap bodies in sufficiently large dimensions. In particular, they proved that any $n$-dimensional centrally symmetric cap body can be illuminated by $<2^n$ directions in Euclidean $n$-space for $n=3,$ $4,$ $9$ and $n\geq 19$. Moreover, Ivanov and Strachan~\cite{IS} proved that any $3$-dimensional centrally symmetric cap body can be illuminated by $6$ directions. This bound cannot be improved: the convex hull of the unit ball and an appropriately scaled regular octahedron is a centrally symmetric cap body with illumination number exactly $6$. In this work we show that for any convex body from the much larger class of not necessarily symmetric $3$-dimensional cap bodies, the illumination number still does not exceed $6$. This is sharp by the same example.

\begin{theorem}\label{thm:main}
    For any cap body $K\subset\mm{}^3$, $I(K)\leq 6$.
\end{theorem}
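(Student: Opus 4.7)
I would begin by decomposing $\partial K$ into three types of pieces: (i) \emph{spherical regions} inherited from $\partial\mb{B}^3$, at a point $p$ of which the outward normal is $p$; (ii) \emph{conical side surfaces} $S_i$, each the union of segments from an apex $x_i$ to its tangent circle $C_i=\{q\in\partial\mb{B}^3:\langle q,x_i\rangle=1\}$, along whose rulings the outward normal is the constant vector $q\in C_i$ (because the cone is tangent to $\partial\mb{B}^3$ along $C_i$); and (iii) the apex points $x_i$ themselves, at which the tangent cone to $K$ is a right circular cone with axis $-x_i/|x_i|$ and half-angle $\alpha_i=\arcsin(1/|x_i|)$. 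Consequently, a direction $v$ illuminates a non-apex boundary point of normal $n$ iff $\langle v,n\rangle<0$, while $v$ illuminates the apex $x_i$ iff $v$ lies in the open spherical cap $U_i\subset\partial\mb{B}^3$ of angular radius $\alpha_i$ centered at $-x_i/|x_i|$.

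Next, I would take four of the six directions to be the vertices $d_1,\ldots,d_4$ of a regular tetrahedron inscribed in $\partial\mb{B}^3$. Since $\sum_j d_j=0$ and $\mathrm{span}\{d_j\}=\mathbb{R}^3$, for every unit vector $n$ at least one $\langle d_j,n\rangle$ must be strictly negative (otherwise all four inner products vanish and $n$ would be orthogonal to all of $\mathbb{R}^3$). Hence \emph{any} rotation of the tetrahedron yields four directions that strictly illuminate every non-apex boundary point of $K$, leaving the rotation free to be optimized against the apex configuration.

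The heart of the argument is then to select the tetrahedral rotation together with two additional directions $e_1,e_2$ so that every cap $U_i$ contains one of the six directions. Two ingredients control the apex configuration. First, convexity of $K=\bigcup_i\mathrm{conv}(\{x_i\}\cup\mb{B}^3)$ couples apex points of large norm: when $|x_i|$ and $|x_j|$ are both large, the segment $x_ix_j$ must lie in the union of the two individual cones, which bounds the angular distance between $x_i/|x_i|$ and $x_j/|x_j|$ in terms of $\alpha_i,\alpha_j$; within such a cluster the caps $U_i$ overlap substantially, leaving only boundedly many ``effectively distinct'' caps to cover. Second, for a uniformly random rotation of the tetrahedron, an expectation argument bounds the number of effective caps missed by $\{d_1,\ldots,d_4\}$; by choosing the rotation optimally one arranges that at most two effective caps are missed, and $e_1,e_2$ are placed inside them. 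Because this plan must succeed uniformly over every combinatorial type of apex configuration that can arise, the existence of a valid choice in each case is certified by an integer linear program whose $0/1$ variables encode incidences between candidate directions and caps, and whose constraints encode the open-hemisphere illumination conditions.

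The chief obstacle, compared with the centrally symmetric setting of~\cite{IS}, is the absence of antipodal pairing among apex points: without it, the apex configurations are much less constrained. The main technical work lies in reducing the continuous family of cap-body geometries to a finite, ILP-tractable case list while retaining enough geometric information to prove feasibility in every case; quantifying the convexity-induced angular clustering and showing that at most two effective caps can escape the random tetrahedron are the steps I expect to be the most delicate.
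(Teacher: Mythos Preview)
Your overall architecture---four tetrahedral directions plus two free ones, a random rotation to bound the expected number of uncovered apex-caps, and an integer program to certify the bound---matches the paper. But the geometric input you feed into that machine is backwards, and this is a genuine gap.

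You claim that convexity forces large-norm apices to \emph{cluster}, so that the corresponding caps $U_i=C(-\widehat{x}_i,\alpha_i)$ ``overlap substantially'' and only finitely many ``effectively distinct'' caps remain. The opposite is true. Convexity of a cap body is equivalent to the open \emph{base} caps $C(\widehat{x}_i,\varphi_i)$ being pairwise disjoint. When $|x_i|$ is large, $\varphi_i=\arccos(1/|x_i|)$ is close to $\pi/2$, so the base cap is large; non-overlap then forces the centers $\widehat{x}_i$ (and hence the $-\widehat{x}_i$) far apart. Meanwhile $\alpha_i=\pi/2-\varphi_i$ is small. Thus the targets $U_i$ for large-norm apices are \emph{tiny and well separated}, not overlapping clusters. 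Your proposed reduction to ``boundedly many effective caps'' does not go through, and without a correct constraint the expectation bound cannot be established.

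What the paper actually extracts from convexity are two \emph{packing} inequalities: the total spherical measure of the base caps is at most $1$, and (by a short linear-algebra lemma) at most four base caps can have radius exceeding $\pi/4$. These become linear constraints on integer variables $n_i$ that count how many caps fall into each of a finite list of radius buckets; the objective is the worst-case expected number of caps missed by a uniformly random tetrahedron, computed from the exact formula for the measure of the union of four congruent tetrahedrally placed caps. This is a single, fixed ILP in the bucket counts---not a $0/1$ incidence program over discretized configurations as you describe---and its optimum is shown (with rational arithmetic) to be below $3$, yielding at most two missed caps. Your plan would need to replace the clustering idea with these packing constraints and reformulate the ILP accordingly.
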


In particular, \cref{thm:main} verifies the conjecture for all cap bodies in $\mm^3$. The illuminating directions can be taken to be vertices of a regular tetrahedron, together with two special directions depending on the body. In particular, we show, using integer linear programming,  that for a random rotation of vertices of regular tetrahedron, the maximum (taken over all cap bodies) of expected number of un-illuminated caps is strictly less than 3.

\section{Preliminaries}


Let $\mm{}^n$ be equipped with the standard Euclidean inner product $\langle \cdot,\cdot \rangle$, and the norm $\|\cdot\|$.  The unit sphere centred at the origin is denoted by $\mb{S}^{n-1}:=\{x\in \mm{}^n: \|x\|=1\}$, and the unit ball by $\mb{B}^{n}=\{x\in \mm{}^n: \|x\|\leq 1\}$. For any points $x,y\in \mb{S}^{n-1}$, the geodesic distance between them is $\theta(x,y):=\arccos\langle x,y\rangle$. For $\xi \in \mb{S}^{n-1}$, define the open and closed caps on $\mb{S}^{n-1}$ centred at $\xi$ of radius $\varphi$ by
$$C(\xi,\varphi):=\{y \in \mb{S}^{n-1}: \langle \xi, y \rangle> \cos\varphi\}, \quad \quad C[\xi,\varphi]:=\{y \in \mb{S}^{n-1} : \langle \xi, y \rangle\geq \cos\varphi\}.$$

Recall that $\text{conv}(\cdot)$ is the closed convex hull. $K$ is a convex body in $\mm{}^n$ if $K\subset\mm^n$ is a convex compact set with non-empty interior. We now repeat the definition of the subclass of cap bodies. $K\subset\mm^n$ is a cap body if and only if $K=\text{conv}(\mb{B}^n\cup V)$ for a set $\{x_i\}_{i=1}^m\subset\mm^n\setminus\mb{B}^n$ of ``vertices'' such that the closed segment joining any two distinct $x_i,x_j$, $1\le i<j\le m$, intersects $\mb{B}^n$. Cap bodies are obtained by taking the union of $\mb{B}^n$ with  ``spikes'' $\text{conv}(\{x_i\} \cup \mb{B}^{n})\setminus \mb{B}^n$, $1\le i\le m$. Convexity of the resulting union is equivalent to the property that the spikes have mutually disjoint interiors, or to the above stated property that the segments joining distinct vertexes intersect $\mb{B}^n$. An example of a cap body is given in \cref{fig:capbody}.
\begin{figure}[h]
    \centering
    \begin{tikzpicture}[scale=2]
    
        \draw (0,0) circle(1);

        \coordinate [label=left:$x_1$] (A) at (-1.25,1.35);
        \draw (-1.25,1.25) -- (0.225,0.975);
        \draw (-1.25,1.25) -- (-0.99,-0.14);

        \coordinate [label=left:$x_2$] (B) at (1.85, 0.29273);
        \draw (1.50719, 0.29273) -- (0.52,0.85);
        \draw (1.50719, 0.29273) -- (0.82,-0.57);

        \draw[dashed] (-1.25,1.25)--(1.50719, 0.29273);

        \node at (0,0) (o1) {};
        \fill[black] (o1) circle (1pt) node {};
        \node at (0.15,-0.15) {$O$};
        
        \node at (-0.4,-0.15) {$\mb{B}^2$};
    \end{tikzpicture}
    \caption{A cap body in $\mb{R}^2$ with 2 vertices $x_1,x_2$.}
    \label{fig:capbody}
\end{figure}


For a given vertex $x_i$ of a cap body, the corresponding base cap (or simply cap) is defined to be the set 
\[
S_i:=\overline{\text{conv}(\{x_i\}\cup \mb{B}^{n})\setminus \mb{B}^n}\cap \mb{S}^{n-1},
\]
where $\overline{(\cdot)}$ denotes the closure of the set (see \cref{fig:cap}).

\begin{figure}[h]

\begin{tikzpicture}[scale=2,line cap=round,line join=round,x=1cm,y=1cm]

\hspace{-1.35cm}

\clip(-4.667072743468536,-1.013299733565548) rectangle (4.814020724982975,2.232276514659644);
\draw [shift={(0,0)},line width=1pt,color=black,fill=black,fill opacity=0.10000000149011612] (0,0) -- (90:0.17134506268285862) arc (90:150:0.17134506268285862) -- cycle;
\draw [line width=1pt] (0,0) circle (1cm);
\draw [line width=1pt] plot[domain=4.1887902047863905:5.235987755982988,variable=\t]({1.7320508075688772*cos(\t r)},{1.7320508075688772*sin(\t r)+2});
\draw [line width=1pt, dotted]  plot[domain=0.523598776:2.61799,variable=\t] ({cos(\t r)},{0.5+0.35*sin(\t r)-0.35*0.5});
\fill [color=gray, opacity=0.2, variable=\x]
(-0.86602540,0.5)
--plot[domain=4.1887902047863905:5.235987755982988,variable=\t]({1.7320508075688772*cos(\t r)},{2+1.7320508075688772*sin(\t r)})
--(0.86602540,0.5)
--plot[domain=0.523598776:2.61799,variable=\t] ({cos(\t r)},{sin(\t r)})
--(-0.86602540,0.5);
\fill [color=gray, opacity=0.2, variable=\x]
(-0.86602540,0.5)
--plot[domain=2.61799:0.523598776,variable=\t] ({cos(\t r)},{0.5+0.35*sin(\t r)-0.35*0.5})
--(0.86602540,0.5)
--plot[domain=0.523598776:2.61799,variable=\t] ({cos(\t r)},{sin(\t r)})
--(-0.86602540,0.5);
\draw [line width=1pt] (0,2)-- (-0.8660254037844386,0.5);
\draw [line width=1pt] (0,2)-- (0.8660254037844387,0.5);
\draw [line width=1pt, dotted] (-0.8660254037844386,0.5)-- (0,0);
\draw [line width=1pt, dotted] (0,0)-- (0,2);
\draw [fill=black] (0,0) circle (1pt);
\draw [fill=black] (0,2) circle (1pt);
\draw[color=black] (0.04491648031007679,2.124196395875493+0.01) node {$x_i$};
\draw [fill=black] (0,1) circle (1pt);
\draw[color=black] (0.04491648031007679+0.1,1.1132605260466277+0.1) node {$\widehat{x_i}$};
\draw[color=black] (0.20483853881407818-0.4,0.12517066457547704-0.2) node {$\phi_i $};
\draw[color=black] (0.20483853881407818-0.1,0.12517066457547704-0.2) node {$O$};
\draw[color=black] (0.866+0.5,0.5+0.2) node {$S_i=C[\widehat{x_i},\phi_i]$};
\end{tikzpicture}
\caption{A base cap $S_i$ with its center $\widehat{x_i}$.}
\label{fig:cap}
\end{figure}

Note that $S_i = C[\widehat x_i,\varphi_i]$, where $\widehat{x}_i:=\frac{x_i}{\|x_i\|}$ is the centre of the cap, and $\varphi_i=\arccos{}\frac{1}{\|x_i\|}$ is the radius of the cap. It is an easy observation that the base caps always have acute radii, i.e., $\varphi_i<\pi/2$ for $1\le i\le m$. Furthermore, observe that the convexity of $K$ implies that $C(\widehat x_\alpha,\varphi_\alpha)\cap C(\widehat x_\beta,\varphi_\beta)=\emptyset$ for any distinct $x_\alpha, x_\beta\in \{x_i\}_{i=1}^m$. One can construct a cap body from any given system of mutually non-overlapping open caps of radius $<\tfrac\pi2$ on the sphere. In particular, the vertex $x_i$ can be expressed using $C(\widehat x_i,\varphi_i)$ as follows: \[x_i=\frac{\widehat x_i}{\cos{\varphi_i}}.\] We use the following proposition from~\cite{BIS,ABPR}:


\begin{proposition}
    \label{prop:ABPR} A cap body $K$ with vertices $\{x_i\}_{i=1}^m$ is illuminated by the directions $\{v_j\}^k_{j=1} \subset \mb{S}^{n-1}$ if:
    \begin{enumerate}[i)]
    \item $C\left(-\widehat{x}_i,\frac{\pi}{2}-\varphi_i\right)\cap \{v_j\}^k_{j=1} \neq \emptyset$ for each $i$, $1\leq i\leq m$,
    \item the positive hull of $\{v_i\}^k_{i=1}$ is $\mm{}^n$, i.e. for all $x
    \in \mm{}^n$ there are positive $c_1,\ldots,c_k$ such that $x=c_1v_1+\cdots+c_kv_k$. 
\end{enumerate}
\end{proposition}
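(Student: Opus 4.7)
The plan is to decompose the boundary $\partial K$ into two parts: the ``spherical'' piece $\partial K\cap\mb{S}^{n-1}$, consisting of points of the unit sphere not covered by any open base cap, and the ``conical'' pieces associated with each vertex, where the $i$-th conical piece is the lateral surface of the cone $\text{conv}(\{x_i\}\cup\mb{B}^n)$ together with its apex $x_i$, that is, the union of segments $[x_i,z]$ for $z\in\partial S_i$. The strategy is to show that condition (i) exactly provides an illuminating direction for each conical piece, and that condition (ii) handles every point of the spherical piece.

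For the conical piece at $x_i$, I would begin by identifying the outward normal to $K$ along a segment $[x_i,z]$ with $z\in\partial S_i$. The cone with apex $x_i$ is tangent to $\mb{B}^n$ along $\partial S_i$, so in the two-dimensional plane spanned by $\widehat x_i$ and $z$, the generating line through $x_i$ and $z$ is tangent to the unit circle at $z$ and hence perpendicular to $z$. Since $z$ points away from the origin, the outward unit normal to the lateral surface at every interior point of the segment $[x_i,z]$ is $z$ itself; moreover at $z\in\partial S_i$ this coincides with the outward sphere normal, so no singularity occurs there. Consequently $v\in\mb{S}^{n-1}$ illuminates the entire lateral surface of the cone if and only if $\langle v,z\rangle<0$ for all $z\in\partial S_i$, and by convexity of a spherical cap of radius less than $\tfrac\pi2$, this is equivalent to $\langle v,z\rangle<0$ for every $z\in S_i=C[\widehat x_i,\varphi_i]$. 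Parameterizing $z=\cos\varphi_i\,\widehat x_i+\sin\varphi_i\, u$ with $u\perp\widehat x_i$, $\|u\|=1$, and writing $v=-\cos\alpha\,\widehat x_i+\sin\alpha\, w$ with $w\perp\widehat x_i$, $\|w\|=1$, a short trigonometric computation shows the maximum of $\langle v,z\rangle$ over admissible $z$ equals $-\cos(\alpha+\varphi_i)$, so the requirement becomes $\alpha<\tfrac\pi2-\varphi_i$, i.e.\ $v\in C(-\widehat x_i,\tfrac\pi2-\varphi_i)$. A parallel polarity argument, noting that the normal cone to $K$ at the apex $x_i$ is generated by $C[\widehat x_i,\varphi_i]$, gives exactly the same criterion for illuminating the isolated boundary point $x_i$. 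Thus condition (i) is precisely the statement that some $v_j$ illuminates every point of the $i$-th conical piece.

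For a boundary point $y\in\mb{S}^{n-1}\cap\partial K$ lying on the spherical piece, $K$ is smooth at $y$ with outward unit normal $y$, so $y$ is illuminated by $v$ precisely when $\langle v,y\rangle<0$. Condition (ii) supplies such a direction: writing $-y=\sum_{j}c_j v_j$ with $c_j>0$ and taking the inner product with $y$ yields $-1=\sum_j c_j\langle v_j,y\rangle$, which forces $\langle v_j,y\rangle<0$ for at least one index $j$.

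The main obstacle, and the computational heart of the argument, is the passage from ``$\langle v,z\rangle<0$ for every $z$ in a spherical cap of radius $\varphi_i<\tfrac\pi2$'' to the clean membership condition $v\in C(-\widehat x_i,\tfrac\pi2-\varphi_i)$; once that polarity identity is in hand, conditions (i) and (ii) line up one-to-one with the two pieces of the boundary decomposition and the proof closes.
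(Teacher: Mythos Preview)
The paper does not prove this proposition; it is quoted from the references \cite{BIS,ABPR}, so there is no in-paper argument to compare against. Your direct proof is correct and follows the natural route one would expect from those sources: split $\partial K$ into the spherical part and the conical pieces, identify the outer normals on each, and check that (i) and (ii) supply an illuminating direction everywhere.

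Two small remarks. First, at the apex $x_i$ the normal cone of $K$ is in general only \emph{contained} in the cone generated by $C[\widehat x_i,\varphi_i]$ (a nearby vertex could trim it), not equal to it; but since you are establishing a sufficient condition this inclusion is all that your argument actually uses. Second, your passage from ``$\langle v,z\rangle<0$ for all $z\in\partial S_i$'' to the same inequality on all of $S_i$ is correct---the maximum of the linear form $\langle v,\cdot\rangle$ over the closed cap is attained either at $v$ itself (which your boundary hypothesis forces to lie outside $S_i$) or on $\partial S_i$---though the phrase ``by convexity of a spherical cap'' slightly undersells the reasoning.
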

We will say that the cap $S_i$ is illuminated by $v_j$ if and only if $v_j\in C\left(-\widehat{x}_i,\frac{\pi}{2}-\varphi_i\right)$.

\section{Proof of \cref{thm:main}}


First, let us prove the existence of a cap body $K_0$ such that $I(K_0)=6$. This was already established by Ivanov and Stranchan~\cite{IS}, but we include the example here for the sake of completeness. Define $S:=\{\pm (1,0,0), \pm (0,1,0),\pm (0,0,1)\}$, and construct $K_0$ by taking $6$ caps (open caps) with radius $\frac{\pi}{4}$ and centers at each element of $S$.
As per \cref{prop:ABPR}, we need to find a set $V=\{v_i\}_{i=1}^k$ such that $C\left(-x,\frac{\pi}{4}\right)\cap V \neq \emptyset$ for each $x\in S$, and positive convex hull of $\{v_i\}^k_{i=1}$ is $\mm^3$. 
Note that the caps $C\left(-x,\frac{\pi}{4}\right), \; x\in S$ do not intersect, so each cap requires at least one direction. So, $|V|\geq 6$. It is easy to see that $V=S$ illuminates the body, as positive hull of $S$ is $\mm^3$. This proves that $I(K_0) = 6$.



Now, suppose $K$ is a cap body with vertices $x_1, x_2, \dots, x_m$. Recall that for a given $x_i$, the corresponding base cap has a central angle $\phi_i=\arccos(1/\|x_i\|)$ (see \cref{fig:cap}), 
and we assume that sizes of caps are ordered $\varphi_1\geq \varphi_2 \geq \ldots \geq \varphi_m$. We will show $I(K)\le 6$.




Let $L$ be a set of vertices of a regular tetrahedron inscribed into the unit sphere. It is routine work to show that $L$ satisfies \cref{prop:ABPR} (ii), and thus we only need to check \cref{prop:ABPR} (i) to complete the proof. 

For every $\theta\in(0,\frac{\pi}{2}]$, define $C_\theta:=\bigcup_{l\in L}C[l,\theta]$.
It is well-known and is a simple computation that $C_\theta=\mb{S}^2$ when $\theta \ge \arccos\left({\frac{1}{3}}\right)$. Therefore, by \cref{prop:ABPR}~(i), any vertex $x_j$ of $K$ with $\varphi_j<\frac\pi2-\arccos\left({\frac{1}{3}}\right)$ is illuminated by one of the directions from any rotation of $L$. Thus, in what follows, we assume that $\varphi_m\ge \frac\pi2-\arccos\frac13$.

Let $\sigma$ be the probabilistic spherical measure on $\mb{S}^2$. To find a suitable random rotation that illuminates most of the caps, we need to know the value of $\sigma(C_\theta)$.
    \begin{lemma}
        \label{lem:prob_comp}
        We have:
        \begin{numcases}{\sigma(C_\theta)=}
            2(1-\cos\theta), & $0<\theta\le \tfrac12\arccos(-\tfrac13)$, \label{eqn:case no overlap}\\
            2(1-\cos\theta)-6A_\theta, & $\tfrac12\arccos(-\tfrac13)\le \theta<\arccos\tfrac13$, \label{eqn:case overalap}\\
            1, & $\arccos\tfrac13\le\theta\le\tfrac\pi2$, \label{eqn:case whole sphere}
        \end{numcases}
        where
        \begin{equation*}
        A_\theta=\frac{1}{2\pi}\left(-\arccos\left(\frac{-\frac{1}{3}-\cos^2\theta}{\sin^2\theta}\right)-2\arccos\left(\sqrt{2}\cot\theta\right)\cos\theta+\pi\right).
        \end{equation*}
    \end{lemma}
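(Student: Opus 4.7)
The set $L$ consists of four vertices of a regular tetrahedron inscribed in $\mathbb{S}^2$, so any two distinct $l,l'\in L$ are at geodesic distance $d:=\arccos(-\tfrac13)$, and one also checks that $\langle -l_i,l_j\rangle=\tfrac13$ for $j\ne i$. I would handle the three regimes of $\theta$ separately. For $\theta\le d/2$ the four closed caps $C[l,\theta]$ are pairwise disjoint (centres at distance $d$, radii summing to $\le d$); since each cap has normalized spherical measure $\tfrac12(1-\cos\theta)$, additivity gives \eqref{eqn:case no overlap}. For $\theta\ge\arccos(\tfrac13)$, the four antipodes $-l_i$ are precisely the vertices of the spherical Voronoi diagram of $L$: each $-l_i$ is equidistant from the other three vertices at geodesic distance $\arccos(\tfrac13)$, and by tetrahedral symmetry these are the points of $\mathbb{S}^2$ farthest from $L$, so $L$ $\theta$-covers the whole sphere and \eqref{eqn:case whole sphere} follows.

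The interesting case is $d/2\le\theta<\arccos(\tfrac13)$. A triple intersection $C[l_i,\theta]\cap C[l_j,\theta]\cap C[l_k,\theta]$ would contain the spherical circumcentre $-l_\ell$ of the face $\triangle l_i l_j l_k$, which forces $\theta\ge\arccos(\tfrac13)$; so in this regime all triple (and quadruple) intersections are empty. Inclusion--exclusion then reduces the problem to
\[
\sigma(C_\theta)=4\sigma(C[l,\theta])-\binom{4}{2}A_\theta=2(1-\cos\theta)-6A_\theta,
\]
where $A_\theta$ is the $\sigma$-measure of a single pairwise lens $C[l_1,\theta]\cap C[l_2,\theta]$.

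For the lens computation, fix $l_1,l_2\in L$ and let $P$ be one of the two intersection points of the boundary circles; consider the isoceles spherical triangle $\triangle l_1 l_2 P$ with sides $\theta,\theta,d$. The spherical law of cosines produces exactly
\[
\angle P=\arccos\!\left(\frac{-\tfrac13-\cos^2\theta}{\sin^2\theta}\right),\qquad \angle l_1=\angle l_2=\arccos(\sqrt2\cot\theta),
\]
which are the two quantities appearing in the statement. The lens is a simply connected spherical region bounded by two small-circle arcs meeting at $P$ and its reflection $P'$ across the geodesic through $l_1,l_2$; by the symmetry of $\triangle l_1l_2P$ and $\triangle l_1l_2P'$, each arc has angular span $2\angle l_1$ at its pole and therefore length $2\angle l_1\sin\theta$, while the interior angle of the lens at each of $P,P'$ is $\pi-\angle P$ (read off the two tangent half-planes at $P$). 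Applying Gauss--Bonnet on the unit sphere with geodesic curvature $\cot\theta$ along each small-circle arc oriented with the pole on the interior side gives
\[
\mathrm{Area}(\mathrm{lens})=2\pi-2\cdot(2\angle l_1\sin\theta)(\cot\theta)-2\bigl(\pi-(\pi-\angle P)\bigr)=2\pi-4\angle l_1\cos\theta-2\angle P,
\]
and dividing by $4\pi$ yields the stated expression for $A_\theta$.

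The main obstacle is the Gauss--Bonnet bookkeeping in the last step: correctly identifying the interior angle at $P$ as $\pi-\angle P$ (not $\angle P$) and assigning the correct sign to the geodesic curvature, which requires care since $l_1,l_2$ lie \emph{outside} the lens in this regime. These checks are routine but delicate, and are exactly what assemble the two elementary spherical-trigonometric quantities $\angle P$ and $\angle l_1$ into the precise formula claimed.
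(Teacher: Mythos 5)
Your proof is correct and follows the same decomposition as the paper: disjoint caps for $\theta\le\tfrac12\arccos(-\tfrac13)$, full coverage for $\theta\ge\arccos\tfrac13$ (the covering radius of the tetrahedron), and inclusion--exclusion with six congruent lunes and no triple intersections in the middle range. The only difference is that the paper simply cites a reference for the measure $A_\theta$ of a lune, whereas you derive it from scratch: the spherical law of cosines gives the two angles $\angle P$ and $\angle l_1$, and Gauss--Bonnet with geodesic curvature $\cot\theta$ along the two small-circle arcs assembles them into the stated formula. Your bookkeeping checks out (the interior angle at $P$ is indeed $\pi-\angle P$, and the lens lies on the pole side of each bounding arc, so $\kappa_g=+\cot\theta$ on both), and the resulting expression agrees with the paper's $A_\theta$ and passes the endpoint checks $A_{d/2}=0$ and $\sigma(C_{\arccos(1/3)})=1$; so your write-up is a self-contained replacement for the citation.
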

    \begin{proof}
        Recall that $C_\theta=\mb{S}^2$ for $\theta\geq \arccos(1/3)$, so~\eqref{eqn:case whole sphere} readily follows. \eqref{eqn:case no overlap} is straightforward from the standard $\sigma(C[x,\theta])=\frac{1-\cos\theta}{2}$ and the fact that the four congruent caps in $C_\theta$ do not overlap in this case. For~\eqref{eqn:case overalap}, we observe that there will be six congruent ``lunes'' which are intersections of two caps from $C_\theta$. The measure $A_\theta$ of each lune can be computed using, for example~\cite{TV}*{Eqs.~(2)--(4)}.
    \end{proof}
    
Next, we will upper bound the expected number of caps not illuminated by a random rotation of $L$. This will be done utilizing integer linear programming. We need to consider various cases depending on the number of caps of different sizes. Set $a_0:=\tfrac{19\pi}{180}<\tfrac\pi2-\arccos\tfrac13\le\phi_m$. For a suitable positive integer $t$ that will be selected later, we define a discretization array $a=[a_0, a_1, \dots, a_t]$, where $a_t:=\frac\pi2$ and $a_i=a_0+i\frac{a_t-a_0}{t}$ for $1\leq i\leq t-1$ are equally spaced on $[a_0,a_t]$. We have $\phi_j\in(a_0,a_t]$ for any $j$. Let $n_i$, $0\le i\le \mmone$, denote the number of indices $j$ such that $\varphi_j \in(a_i, a_{i+1}]$. Note that by \cref{prop:ABPR}~(i) a vertex $x_j$ with $\varphi_j \in(a_i, a_{i+1}]$ is illuminated by a random rotation $L'$ of $L$, provided at least one of the points of $L'$ is within the geodesic distance $\tfrac\pi2-\phi_j$ of  $-\widehat{x_j}$. Therefore, the probability that $x_j$ is not illuminated is $1-\sigma\left(C_{\tfrac\pi2-\phi_j}\right)\le 1-\sigma\left(C_{\tfrac\pi2-a_{i+1}}\right)$. Then overall, the expected number of caps which are not illuminated does not exceed
\begin{equation}\label{eqn:target fn}
\sum_{i=0}^{t-1} n_i \left(1-\sigma\left(C_{\tfrac\pi2-a_{i+1}}\right) \right),
\end{equation}
which will be our target function in the integer programming problem. 

Due to the convexity of $K$, we know that the caps $C(\widehat{x}_j,\phi_j)$ do not overlap, so the total measure of these caps is at most $1$. In terms of $n_i$, this provides the following constraint:
\begin{equation}
    \label{eqn:const bases pack}
    \sum_{i=0}^{t-1} n_i\frac{1-\cos a_i}2\le 1.
\end{equation}

To obtain another constraint, we need the following lemma which states that one cannot pack five caps of radius $>\tfrac\pi4$ on $\mb{S}^2$. This follows from, e.g.~\cite{Ra}*{Th.~1~(iii)}, but we include the short proof anyway for completeness.

\begin{lemma} 
\label{lemma:caps_pi/4} If $m\ge 5$, then $\varphi_5\le \frac{\pi}{4}$.
\end{lemma}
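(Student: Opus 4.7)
The plan is to argue by contradiction. Suppose $\varphi_5 > \pi/4$. Since the radii are sorted in non-increasing order, we have $\varphi_i > \pi/4$ for every $i \in \{1,2,3,4,5\}$. By convexity of $K$ (as recorded in the preliminaries), the open base caps $C(\widehat{x}_i,\varphi_i)$ are pairwise disjoint, so for every pair $1 \le i < j \le 5$,
\[
\theta(\widehat{x}_i,\widehat{x}_j) \ge \varphi_i + \varphi_j > \tfrac{\pi}{2},
\]
which is equivalent to $\langle \widehat{x}_i,\widehat{x}_j\rangle < 0$. Thus it suffices to prove that no five unit vectors in $\mm{}^3$ can have pairwise strictly negative inner products.

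To establish this, I would invoke (or prove in-line) the classical linear-algebra statement: any collection of nonzero vectors in $\mm{}^d$ with pairwise negative inner products has size at most $d+1$. A clean proof proceeds by induction on $d$. The base case $d=1$ is immediate: if $a,b,c$ are nonzero reals with $ab,ac,bc<0$, then WLOG $a>0$ forces $b,c<0$ and therefore $bc>0$, a contradiction. For the inductive step with vectors $v_1,\dots,v_n$ in $\mm{}^d$, project $v_2,\dots,v_n$ onto $v_1^\perp$ via $w_i := v_i - \tfrac{\langle v_i,v_1\rangle}{\|v_1\|^2}v_1$. Each $w_i$ is nonzero (otherwise $v_i$ is a negative multiple of $v_1$, forcing $\langle v_i,v_j\rangle>0$ for $j\notin\{1,i\}$), and a direct expansion gives
\[
\langle w_i,w_j\rangle = \langle v_i,v_j\rangle - \frac{\langle v_i,v_1\rangle\langle v_j,v_1\rangle}{\|v_1\|^2} < 0,
\]
since the first summand is negative and the subtracted quantity is non-negative. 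Applying the inductive hypothesis inside the $(d-1)$-dimensional space $v_1^\perp$ yields $n-1\le d$, i.e.\ $n\le d+1$.

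Specialising to $d=3$ gives at most four such vectors, contradicting the existence of five pairwise-obtuse unit vectors $\widehat{x}_1,\dots,\widehat{x}_5$, and completing the proof. The main obstacle here is essentially only to cite or state this linear-algebra lemma cleanly; the geometric step that converts the assumption $\varphi_i>\pi/4$ into negativity of the pairwise inner products is immediate from the non-overlap of the open base caps that was already recorded in the preliminaries.
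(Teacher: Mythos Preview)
Your argument is correct. The geometric reduction---from $\varphi_5>\tfrac{\pi}{4}$ and disjointness of the open base caps to pairwise strictly negative inner products among $\widehat{x}_1,\dots,\widehat{x}_5$---is identical to the paper's. The divergence is only in how the impossibility of five pairwise-obtuse vectors in $\mm^3$ is established. The paper does not invoke the general $d+1$ bound; instead it uses that five points in $\mm^3$ are affinely dependent, writes $\sum c_i\widehat{x}_i=0$ with $\sum c_i=0$, splits the indices by the sign of $c_i$, and obtains $0\le\bigl\|\sum_{c_i\ge0}c_i\widehat{x}_i\bigr\|^2=\sum_{c_i\ge0,\,c_j<0}c_i(-c_j)\langle\widehat{x}_i,\widehat{x}_j\rangle<0$. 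Your route via orthogonal projection onto $v_1^\perp$ and induction on $d$ yields the stronger, dimension-free statement (at most $d+1$ such vectors) and is the textbook proof one would cite; the paper's sign-splitting trick is shorter and fully self-contained for this particular $d=3$, $n=5$ instance. One small polish: your nonvanishing argument for $w_i$ tacitly needs a third index $j\notin\{1,i\}$, so you are implicitly assuming $n\ge3$; this is harmless here (and in the induction one may assume $n\ge d+2\ge3$), but worth a word.
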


\begin{proof}
    Assume to the contrary that $\varphi_5>\frac{\pi}{4}$, hence $\varphi_j>\frac\pi4$, $1\le j\le 5$. 
    As $K$ is a cap body, all base caps $C(\widehat{x}_j,\varphi_j)$, $1\le j\le 5$, are disjoint, so $\theta(x_i,x_j)\geq \phi_i+\phi_j>\frac{\pi}{2},$ and so $\langle \widehat{x}_i,\widehat{x}_j\rangle<0$, $1\le i<j\le 5$. The points $\widehat{x}_1, \widehat{x}_2,\dots, \widehat{x}_5$ are affinely dependent, so there are $c_1$, $c_2$, $c_3$, $c_4$, $c_5$ (not all zero) such that $$c_1\widehat{x}_1+\cdots+c_5\widehat{x}_5=0$$ and $\sum_{i=1}^5c_i=0$. Suppose that $I_+=\{c_i : c_i\geq 0\}$ and $I_-=\{c_j : c_j< 0\}$, now
    $$\sum_{c_i\in I_+} c_i\widehat{x}_i = \sum_{c_j\in I_-} -c_j\widehat{x}_j.$$
    Finally, taking the dot product with $\sum_{c_i\in I_+} c_i\widehat{x}_i$ on both sides we get
    $$0\leq \left\|\sum_{c_i\in I_+} c_i\widehat{x}_i\right\|^2=\sum_{c_i\in I_+} c_i\widehat{x}_i\sum_{c_j\in I_-} -c_j\widehat{x}_j=\sum c_i(-c_j)\langle\widehat{x}_i,\widehat{x}_j\rangle<0,
    $$
    which is the desired contradiction.
\end{proof}

In terms of $n_i$, \cref{lemma:caps_pi/4} implies that
\begin{equation}
    \label{eqn:const large caps}
    \sum_{0\le i<\m\,:\, a_i\ge \tfrac\pi4}n_i\le 4.
\end{equation}

    Let $M_t$ denote the solution of the integer linear programming problem 
    \begin{equation*}
        \text{\it maximize~\eqref{eqn:target fn} subject to~\eqref{eqn:const bases pack} and~\eqref{eqn:const large caps}}
    \end{equation*}
    with non-negative integer variables $n_i$, $0\le i\le t-1$. If for some $t$ we get $M_t<3$, then for every cap body $K$ there exists a rotation of $L$ such that the number of caps not illuminated by this rotation is at most $\lfloor M_t \rfloor\le 2$. Assigning a direction for each of the un-illuminated caps, we obtain an illuminating system with at most $6$ directions.

    For any given $t$, the computation of $M_t$ is a standard integer linear program for which many solvers are available. We are interested in an upper bound on $M_t$. Thus, to avoid any numerical errors, we can fix a positive integer denominator $D$ and round up the coefficients (computed symbolically) in \eqref{eqn:target fn}, 
    and round down the coefficients in \eqref{eqn:const bases pack} to the nearest number from  $\frac1D\mb{Z}$. The obtained modified integer linear program will have rational coefficients and can be solved precisely without numerical errors. The corresponding SageMath~\cite{sagemath} script is given in the appendix of~\cite{v1}. With $D=3000$ and $t=250$, the solution to the modified problem is $\frac{2999}{1000}$, and thus $M_{250}\le 2.999$ implying the desired $\lfloor M_{250} \rfloor\le 2$ and completing the proof.
    

    \begin{remark}
        The script takes less than 2 hours to complete on a modern personal computer. If we switch to computations with floating point arithmetics, the approximate solution can be obtained significantly faster (under 1 second for the same $t=250$). Such computations with much larger $t$ suggest that $\limsup_{t\to\infty} M_t<2.97$. We emphasize that the bound $M_{250}\le 2.999$ in our proof was obtained using symbolic computations only. One can also get the required bound using fewer intervals than $250$ by partitioning $[a_0,\frac{\pi}{2}]$ in a non-uniform manner. We opted to use the uniform partition and simplicity of the exposition at the cost of a slight increase in computation time.
    \end{remark}

    \begin{remark}
        If we aim to show that $I(K)\leq 7$, then a human verifiable proof is possible. The proof would require adding one geometric argument and would make the optimization problem reducible to under a dozen cases. 
    \end{remark}
    
    

\begin{bibsection}
\begin{biblist}

\bib{v1}{article}{
  author={Arman, A.},
  author={Kaire, J. S.},
  author={Prymak, A.},
  title={Illumination number of 3-dimensional cap bodies},
  eprint={https://arxiv.org/abs/2507.08712v1},
  date={2025-07-11},
}

\bib{ABPR}{article}{
   author={Arman, A.},
   author={Bondarenko, A.},
   author={Prymak, A.},
   author={Radchenko, D.},
   title={On a Gallai-type problem and illumination of spiky balls and cap
   bodies},
   journal={Mathematika},
   volume={71},
   date={2025},
   number={2},
   pages={Paper No. e70017},
   issn={0025-5793},
   doi={10.1112/mtk.70017},
}

\bib{BIS}{article}{
   author={Bezdek, K.},
   author={Ivanov, I.},
   author={Strachan, C.},
   title={Illuminating spiky balls and cap bodies},
   journal={Discrete Math.},
   volume={346},
   date={2023},
   number={1},
   pages={Paper No. 113135, 12},
   issn={0012-365X},
   doi={10.1016/j.disc.2022.113135},
}

\bib{BK}{article}{
   author={Bezdek, K.},
   author={Khan, M. A.},
   title={The geometry of homothetic covering and illumination},
   conference={
      title={Discrete geometry and symmetry},
   },
   book={
      series={Springer Proc. Math. Stat.},
      volume={234},
      publisher={Springer, Cham},
   },
   isbn={978-3-319-78434-2},
   isbn={978-3-319-78433-5},
   date={2018},
   pages={1--30},
   doi={10.1007/978-3-319-78434-2\_1},
}

\bib{BKiss}{article}{
   author={Bezdek, K.},
   author={Kiss, G.},
   title={On the X-ray number of almost smooth convex bodies and of convex
   bodies of constant width},
   journal={Canad. Math. Bull.},
   volume={52},
   date={2009},
   number={3},
   pages={342--348},
   issn={0008-4395},
   doi={10.4153/CMB-2009-037-0},
}

\bib{BLNP}{article}{
   author={Bezdek, K.},
   author={L\'angi, Z.},
   author={Nasz\'odi, M.},
   author={Papez, P.},
   title={Ball-polyhedra},
   journal={Discrete Comput. Geom.},
   volume={38},
   date={2007},
   number={2},
   pages={201--230},
   issn={0179-5376},
   doi={10.1007/s00454-007-1334-7},
}

\bib{B}{article}{
   author={Boltyanski, V.},
   title={The problem of illuminating the boundary of a convex body},
   journal={Izv. Mold. Fil. AN SSSR},
   volume={76},
   number={10},
   pages={77--84},
   year={1960}
}

\bibitem{De} B. V. Dekster, Each convex body in ${\mathbb{E}}^3$ symmetric about a plane can be illuminated by 8 directions, \textit{J. Geom.} \textbf{69/1-2} (2000), 37--50. 

\bib{sagemath}{manual}{
      author={Developers, The~Sage},
       title={{S}agemath, the {S}age {M}athematics {S}oftware {S}ystem
  ({V}ersion 3.12.5)},
        date={2024},
        note={{\tt https://www.sagemath.org}},
}

\bibitem{GM} I. T. Gohberg and A. S. Markus, A certain problem about the covering of convex sets with homothetic ones, \textit{Izvestiya Moldavskogo Filiala Akademii Nauk SSSR} (In Russian), \textbf{10/76} (1960), 87--90.

\bibitem{H} H. Hadwiger, Ungel\"oste Probleme Nr. 20, \textit{Elem. der Math.} \textbf{12} (1957), 121. 

\bib{IS}{article}{
   author={Ivanov, I.},
   author={Strachan, C.},
   title={On the illumination of centrally symmetric cap bodies in small
   dimensions},
   journal={J. Geom.},
   volume={112},
   date={2021},
   number={1},
   pages={Paper No. 5, 20},
   issn={0047-2468},
   doi={10.1007/s00022-020-00568-x},
}

\bibitem{L} M. Lassak, Solution of Hadwiger's covering problem for centrally symmetric convex bodies in ${\mathbb{E}}^3$, \textit{J. London Math. Soc.} \textbf{30} (1984), 501--511. 

\bibitem{LE} F. W. Levi, \"Uberdeckung eines Eibereiches durch Parallelverschiebungen seines offenen Kerns, \textit{Arch. Math.} \textbf{6/5} (1955), 369--370. 

\bib{M}{article}{
   author={Martini, H.},
   title={Some results and problems around zonotopes},
   conference={
      title={Intuitive geometry},
      address={Si\'ofok},
      date={1985},
   },
   book={
      series={Colloq. Math. Soc. J\'anos Bolyai},
      volume={48},
      publisher={North-Holland, Amsterdam},
   },
   isbn={0-444-87933-1},
   date={1987},
   pages={383--418},
}

\bib{Na}{article}{
   author={Nasz\'odi, M.},
   title={A spiky ball},
   journal={Mathematika},
   volume={62},
   date={2016},
   number={2},
   pages={630--636},
   issn={0025-5793},
   doi={10.1112/S0025579315000406},
}

\bibitem{P} I. Papadoperakis, An estimate for the problem of illumination of the boundary of a convex body in ${\mathbb{E}}^3$, \textit{Geom. Dedicata} \textbf{75} (1999), 275--285.

\bib{Pr}{article}{
   author={Prymak, A.},
   title={A new bound for Hadwiger's covering problem in $\Bbb E^3$},
   journal={SIAM J. Discrete Math.},
   volume={37},
   date={2023},
   number={1},
   pages={17--24},
   issn={0895-4801},
   doi={10.1137/22M1490314},
}

\bib{Ra}{article}{
   author={Rankin, R. A.},
   title={The closest packing of spherical caps in $n$ dimensions},
   journal={Proc. Glasgow Math. Assoc.},
   volume={2},
   date={1955},
   pages={139--144},
   issn={2040-6185},
}

\bib{TV}{article}{
   author={Tovchigrechko, A., and I. A. Vakser},
   title={How common is the funnel-like energy landscape in protein-protein interactions?},
   journal={Protein Sci.},
   volume={10},
   number={8},
   pages={1572-1583},
   year={2001}
   doi={10.1110/ps.8701}
}
\end{biblist}
\end{bibsection}

\end{document}